\date{}
\title{\bf{\textsc{A version of Herbert A. Simon's model \\with slowly fading memory
and  \\ its connections to branching processes}}}
\author{Jean Bertoin\thanks{ Institut f\"ur Mathematik, 
Universit\"at Z\"urich, 
Winterthurerstrasse 190, 
CH-8057 Z\"urich, Switzerland. \hfill \eject
Email: jean.bertoin@math.uzh.ch}  }
\begin{document}

\maketitle

\newtheorem{theo}{Theorem}
\newtheorem{lemma}{Lemma}
\newtheorem{prop}{Proposition}
\newtheorem{cor}{Corollary}
\newtheorem{defi}{Definition}
\newtheorem{rem}{Remark}

\newcommand{\e}{{\mathrm e}}
\newcommand{\rep}{{\mathrm{rep}}}
\newcommand{\R}{{\mathbb{R}}}
\newcommand{\C}{{\mathbb{C}}}
\newcommand{\E}{{\mathbb{E}}}
\newcommand{\TT}{\mathbb{T}}
\newcommand{\dd}{\mathrm{d}}
\newcommand{\N}{\mathbb{N}}
\newcommand{\PP}{\mathbb{P}}
\newcommand{\Q}{\mathbb{Q}}
\newcommand{\Z}{\mathbb{Z}}
\newcommand\Ind{\mathbbm{1}}

\begin{abstract}  Construct recursively a long string of words $w_1\ldots w_n$, such that at each step $k$,
$w_{k+1}$ is a new word  with a fixed probability $p\in(0,1)$, and repeats some preceding word with complementary probability $1-p$. More precisely, given a repetition occurs, $w_{k+1}$ repeats 
the $j$-th word with probability proportional to $j^{\alpha}$ for $j=1,\ldots, k$. We show that the proportion of distinct words occurring exactly $\ell$ times converges as the length $n$ of the string goes to infinity to some probability mass function in the variable $\ell\geq 1$, whose tail  decays as a power function  when $1-p>\alpha/(1+\alpha)$, and exponentially fast when $1-p<\alpha/(1+\alpha)$.
 \end{abstract}

\medskip

\noindent \emph{\textbf{Keywords:}  Yule-Simon model, preferential attachment, memory, continuous state branching process, Crump-Mode-Jagers branching process, heavy tail distributions.}

\medskip

\noindent \emph{\textbf{AMS subject classifications:}}  60J85 ; 05C85.
\section{Introduction}
Partly inspired by the earlier work of G. U. Yule, Herbert A. Simon \cite{Simon} argued in 1955  that an elementary stochastic model could explain the occurrence of power tail distributions in a variety of empirical data. In short, he introduced a simple random algorithm  to produce a long string of words $w_1\ldots w_n$. This algorithm depends on a parameter $p\in(0,1)$ which, in some sense, measures the innovation, and can be described as follows. 
Once the first word $w_1$ has been written, for each $k=1, \ldots,n-1$,  with probability $p$, $w_{k+1}$ is a new word different from all the preceding, and with complementary probability $\bar p=1-p$, $w_{k+1}$ is copied from a uniform sample from $w_1, \ldots, w_k$.

Simon's model can be viewed  the germ of many network growth dynamics with preferential attachment that  flourished since the turn of the millennium. We merely refer here to \cite{BornEb} for an application to the World Wide Web, and to the textbooks \cite{DM2, Durrett, RvdH} and their bibliographies which contain a wealth of relevant references. During the last decade or so, several works in this field have aimed at taking also into account further relevant features for specific models. In particular, an important issue e.g. in citation networks,  is to incorporate  aging, or decay of relevance,  or fading memory phenomena. Dorogovtsev  and Mendes \cite{DM1, DM2}  first generalized the Barab\' asi-Albert  model by letting nodes loosing attraction as their ages increase. 
Cattuto {\em et al.} \cite{CLS} modified Simon's model by assigning different weights to  different words; we shall present their framework explicitly later on as it constitutes the basis of the present work. 
 In a somewhat different direction, the linear rate birth processes related to Yule's model or to classical  preferential attachment dynamics, have been replaced by nonlinear time fractional birth processes to emulate the effect of a slowly-decaying memory in  \cite{ LPS, PPS, Poli}, whereas \cite{GvHW} rather introduces fitness and age-dependency for those birth processes. Further generalizations can be found in the literature,  also refer for instance to  \cite{SSK, Wyuyu}.

The modification Simon's model  that Cattuto {\em et al.} considered concerns the copy mechanism, which is no longer made uniformly at random but rather depends on weights assigned to each word. In general, the weight of a word $w_i$ is a function $q(i,k)$ of  both the rank $i$ of that word and the current total length $k$ of the string. Recall that  the word $w_{k+1}$ is attached at the $k$-th step of the algorithm to the string $w_1 \ldots w_{k}$; just as for the original model of Simon,  $w_{k+1}$ is a new word different from all the preceding with a fixed  probability $p$. However , with probability $\bar p=1-p$, $w_{k+1}$ is a copy of one of the preceding words  sampled now at random with probability proportional to its weight, meaning $w_{k+1}=w_{J}$ for some random index $J$ with distribution
$$\PP(J=j)=\frac{q(j,k)}{\sum_{i=1}^k q(i,k)},\qquad j=1,\ldots, k.$$

We refer henceforth to this modified algorithm as the {\em $q$-weighted Simon's model}.
When weights $i\mapsto q(i,k)$ increase with the index $i\leq k$, recent words are more likely to be repeated than old ones, henceforth emulating a fading memory effect; Simon's original model is recovered plainly when all weights are equal.
Obviously, assigning weights to words also changes profoundly the mathematical analysis of the algorithm. Notably, the process which counts the number of occurrences of some given world as a function of the number of steps of the algorithm is a Markov chain with explicit transition distributions  for Simon's original model, but the Markov property is lost when weights are not all equal.

Cattuto {\em et al.} \cite{CLS}  considered hyperbolic weights,  
viz. $q(i,k)=1/(\tau +k-i)$, where $\tau >0$ is a characteristic time-scale. Quite recently, \cite{SPAB} dealt with short-ranged memory with  $q(i,k)=\Ind_{\{k-i\leq \kappa\}}$, where $\kappa>0$ should be thought of as the memory range.
In the present work, we shall focus on weights depending as a power function on the rank of the word only, 
\begin{equation}\label{E:q}
q(i,k)=q(i)=i^{\alpha}, \qquad \text{for all $1\leq i \leq k$}
\end{equation}
where $\alpha>0$ is another parameter. Note that  for any $a\in(0,1)$, the probability of repeating  some word $w_i$ with  $i\leq ak$ at the $k$-th step tends to $a^{\alpha+1}$ as $k\to \infty$ for the choice \eqref{E:q}, whereas this probability would converge $0$ in Cattuto's setting. Hence, memory fades away more slowly in the present framework than in Cattuto's. 

Our purpose here is not to increase marginally the already rich variety of models that exist in this area, but rather to point out that, despite of the loss of the Markov property, a fairly detailed  analysis can be made thanks to connections with some rather simple branching processes that may be interesting in their own right. A priori, the appearance of branching processes in this setting should certainly not come as a surprise, as it is well-know that they play a fundamental role in preferential attachment dynamics. Nonetheless the connection here is somehow less direct, it only holds asymptotically, and it does not seem straightforward to describe a rigorous construction of weighted Simon's models from branching processes.

To start with, recall two fundamental features of Simon's original model. First,  the proportion of distinct words which are repeated some fixed amount of times  converges as the length of the string goes to infinity. Specifically, 
note that the number of different words which have been used when the string has total length $n$ is close to $np$ when $n\gg 1$. If for every $j\geq 1$,  we write $\nu_n(j)$ for the number of different words which occur exactly $j$ times in a string of total length $n$, then
$$\lim_{n\to \infty} \frac{\nu_n(j)}{np} = \rho {\mathrm B}(j, \rho +1)\qquad \text{a.s. for all $j\geq 1$,}$$
where $\rho=1/\bar p $ and ${\mathrm B}$ stands for the Beta function.
The right-hand side above  is a probability mass function called the Yule-Simon distribution (with parameter $\rho$). 
 Second, since ${\mathrm B}(j, \rho +1) \sim \Gamma(\rho +1) j^{-(\rho+1)}$  as $j\to\infty$;
  the tail  of the Yule-Simon distribution decays as a power function with exponent $-\rho$. 

  We now state a simple version of our main result for the $q$-weighted Simon's model when weights are given by \eqref{E:q}. In short,  as the length of the string tends to infinity,
 the proportion of different words with any given occurrence number converges towards some mass distribution $\varphi$. 
Further,  $p_c=1-\alpha/(1+\alpha)$ is the critical innovation parameter, in the sense that $\varphi$ has a power tail when 
$p<p_c$, and and exponential tail when $p>p_c$. 
  
  \begin{theo}\label{T1}  Let $\nu_n(\ell)$ denote the number of different words which are repeated exactly $\ell$ times in a string of total length $n$ generated by the  $q$-weighted Simon's model, with $q$ given by \eqref{E:q}. Then we have:
   \begin{enumerate}
  \item[(i)] 
   for every $\ell\geq 1$, 
$$\lim_{n\to \infty} \frac{\nu_n(\ell)}{np} = \varphi(\ell)\qquad \text{in probability,}$$
  where $\varphi$ is the probability mass function of some random variable with values in $\N=\{1,2,\ldots\}$,

\item[(ii)]   
   if $\bar p = 1-p >\alpha/(1+\alpha)$, then
  $$\sum_{j>k} \varphi(j) \sim C k^{-1/(\bar p(1+\alpha)-\alpha)}\qquad \text{as }k\to \infty$$
  for some $0<C<\infty$, 
  \item[(iii)] if $\bar p <\alpha/(1+\alpha)$, 
  then there exists $c>0$ with
  $$\sum_{j=1}^{\infty} \e^{cj}  \varphi(j)<\infty.$$
  \end{enumerate}
 \end{theo}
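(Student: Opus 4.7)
The plan is to approximate the dynamics by a continuous-time branching process describing the growth of a single word's weight and occurrence count, and then to aggregate across all words via a law of large numbers. Throughout, write $\tau(k) := \sum_{j=1}^k j^\alpha \sim k^{\alpha+1}/(\alpha+1)$ and, for a word born at rank $i$, let $W_i(k) := \sum_{j\leq k,\,w_j=w_i} j^\alpha$ be its total weight and $N_i(k)$ its number of occurrences up to step $k$.

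\textbf{Step 1: branching-process limit for a single lineage.} The conditional one-step increments
\[
\E[\Delta W_i(k)\mid \mathcal F_k] = \bar p\,\frac{(k+1)^\alpha\,W_i(k)}{\tau(k)}, \qquad \E[\Delta N_i(k)\mid \mathcal F_k] = \bar p\,\frac{W_i(k)}{\tau(k)}
\]
give $W_i$ a multiplicative drift of $1+\bar p(1+\alpha)/k$ per step, so $\E W_i(k) \sim i^\alpha (k/i)^{\bar p(1+\alpha)}$. Under the logarithmic time change $t = \log(k/i)$ and the rescaling $\widetilde W_i(t):= W_i(\lfloor i\e^t\rfloor)/i^\alpha$, the jump sizes of $\widetilde W_i$ scale like $\e^{\alpha t}$ at rate proportional to $\widetilde W_i$, which is exactly what is needed for a nondegenerate branching limit. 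A martingale and compactness argument should then yield the convergence in distribution, as $i\to\infty$, of $(\widetilde W_i(t), N_i(\lfloor i\e^t\rfloor))_{t\geq 0}$ to a limiting pair $(W_\infty(t), N_\infty(t))$, where $W_\infty$ is the mean-growth-rate $\bar p(1+\alpha)$ backbone (morally a CSBP, or equivalently a CMJ genealogy for the occurrence times) and $N_\infty$ is a Cox-type counting process driven by $W_\infty$.

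\textbf{Step 2: aggregation and identification of $\varphi$.} The number $D_n$ of distinct words in the string of length $n$ satisfies $D_n/n\to p$ in probability, and the normalized birth ranks of these distinct words have empirical distribution converging to the uniform law on $(0,1)$. For a word with birth rank $i\sim \lambda n$, its count at step $n$ corresponds via Step 1 to $M(\lambda) := N_\infty(\log(1/\lambda))$. Set
\[
\varphi(\ell) := \int_0^1 \PP(M(\lambda)=\ell)\,\dd\lambda,
\]
which is a probability mass function on $\N$ since $M(\lambda)\geq 1$ almost surely. Convergence $\nu_n(\ell)/(np)\to \varphi(\ell)$ in probability is then obtained via a second-moment estimate: the mean follows from mixing the Step 1 limit over $\lambda$, and the variance is controlled by noting that two distinct lineages interact only through the common normalizer $\tau(k)$, each carrying only a vanishing fraction of it, so they asymptotically decorrelate.

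\textbf{Step 3: tail asymptotics, and main obstacle.} In the supercritical regime $\bar p(1+\alpha)>\alpha$, iteration of the drift equations of Step 1 yields $\E N_i(k)\asymp (k/i)^{\bar p(1+\alpha)-\alpha}$; the a.s.\ martingale limit associated with the branching process promotes this to $M(\lambda)\sim V\,\lambda^{-(\bar p(1+\alpha)-\alpha)}$ with $V>0$ a.s., and integrating the event $\{M(\lambda)>k\}$ over $\lambda\in(0,1)$ produces the power tail of (ii) with the stated exponent $1/(\bar p(1+\alpha)-\alpha)$. In the subcritical regime $\bar p(1+\alpha)<\alpha$, the drift equation keeps $\E N_\infty(t)$ uniformly bounded in $t$, and a Laplace-exponent estimate for the limit branching process upgrades this to a uniform exponential-moment bound $\E\e^{c N_\infty(t)}\leq C$, which transfers to $\varphi$ after mixing over $\lambda$ and proves (iii). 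The principal obstacle throughout is the branching approximation in Step 1: unlike in Simon's original model, different lineages are genuinely coupled through the common pool $\tau(k)$, so the branching structure is only asymptotic; establishing it with enough precision to drive both the decorrelation of Step 2 and the almost-sure positivity of the martingale limit $V$ used in Step 3 requires quantitative control of the self-normalizing martingales $W_i(k)/\tau(k)$.
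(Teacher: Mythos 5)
Your plan is essentially the paper's proof: a scaling limit for the (weight, count) pair of a tagged word under a logarithmic time change, identification of the limit with a simple branching process (CSBP/C-M-J), the mixture $\varphi(\ell)=\int_0^1\PP(\cdot)\,\dd\lambda$ over the uniform birth rank, a second-moment/decorrelation argument for (i), the martingale limit for the power tail in (ii), and a Laplace-transform (first-passage) estimate for the exponential moment in (iii). One clarification dissolves what you call the principal obstacle: the normalizer $\tau(k)=\sum_{j\leq k}j^\alpha$ is \emph{deterministic} (weights depend only on rank, not on the random string), so the tagged pair (attraction, count) of a new word is an \emph{exact} time-inhomogeneous Markov chain with explicit transition probabilities; Step 1 then reduces to standard generator convergence for Markov chains (this is precisely the paper's Lemma 1), and two tagged lineages interact only through the mutually exclusive choice at each step, which is what yields the asymptotic independence needed in Step 2. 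One small correction: the martingale limit $V$ is \emph{not} almost surely positive (the limiting branching process becomes extinct, i.e.\ $W_\infty=0$, with positive probability even in the supercritical regime), but this is harmless since the tail constant is $C=\E\bigl((W_\infty/(1-b))^{1/(\bar p(1+\alpha)-\alpha)}\bigr)$, which is positive because the martingale is uniformly integrable; making the "integrate $\{M(\lambda)>k\}$ over $\lambda$" step rigorous is where the paper's $L^k$-boundedness of the martingale and the sup/inf variables $\beta^*(t),\beta_*(t)$ come in.
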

  
  More precisely, we shall describe the probability mass function $\varphi$ that appears in Theorem \ref{T1} in two different ways.
  First, it can be seen as the law of the number of birth events in some continuous state branching process  that occurred before an  independent exponential random time. Equivalently, it can also be seen as the law of a general (Crump-Mode-Jagers) branching process also evaluated at an independent exponential random time. 
  Although we have not been able 
to provide any explicit expressions $\varphi$, we shall establish the power or exponential tail behaviors stated in Theorem \ref{T1} by making use of classical martingales associated to branching processes.

The rest of this work is organized as follows. Section 2 introduces the notion of attraction of a given word as a function of the length of the string. This enables us to circumvent the lack of the Markov property of
 the occurrence counting alone by viewing it as a component of a Markovian pair, and then to analyze its asymptotic behavior. Section 3 describe representations of the limiting processes of Section 2 in terms of  simple branching processes, and describe their behaviors as time goes to infinity using classical martingales in this setting. Finally, Theorem \ref{T1} is proved in Section 4. 

\section{Occurrence counting and attraction}
As it has already been mentioned in the Introduction, an obvious obstacle in the analysis of frequencies of words  for a weighted Simon's model is that the process counting the number of occurrences of a word is not Markov. We shall circumvent this difficulty by considering another natural functional of the algorithm, which enjoys the Markov property and from which occurrence counting can be recovered.

To start with, we say that the $j$-th word in a string is new if $w_j\neq w_i$ for all $1\leq i<j$ (for any $j\geq 2$, this event has probability $p$); otherwise, we say that the $j$-th word
 is a repetition. When the $j$-th word is new, we define its occurrence counting and its attraction  as a function of length of the string, respectively by
 $$N_j(n)= \mathrm {Card} \{1\leq k \leq n: w_k=w_j\}$$
 and 
$$A_j(n)= n^{-\alpha} \sum_{k=1}^n k^{\alpha} \Ind_{\{w_k=w_j\}}.$$
Observe that the second quantity is proportional to the probability that the $j$-th word will be repeated at the $n$-th step (the normalization is chosen for the purpose of convenience as it should become clear later on). 
When the $j$-th word is a repetition, we set $N_j(n)=A_j(n)=0$. 

When the $j$-th word is new, the evolution of  its occurrence counting and of its attraction bear obvious similarities, especially when $n\gg 1$: if $w_j$  is repeated at the $n$-th step, the occurrence counting increases exactly by $1$ and the attraction by slightly less than $1$, and otherwise  the occurrence counting is unchanged whereas the attraction slightly decreases. In particular there is the identity
$$ N_j(n)= \mathrm {Card} \{0\leq k \leq n-1: A_{j}(k) < A_j(k+1)\}.$$

Our purpose here is to check that up-to a simple time-rescaling, occurrence counting and attraction converge jointly in distribution as the total length of the string goes to $\infty$. To describe their limits, we introduce a time-inhomogeneous Markov process with c\`adl\`ag paths in $[0,\infty)$,  $A=(A(t))_{t\geq 0}$. The law $P_u$ of $A$ depends on a parameter $u\in(0,1)$ which should be thought of as a birth-time. Specifically, under $P_u$, we have that $A(t)=0$ for $t<u$ and $A(u)=1$,  the (right) slope of the path at time $t>0$ is  $-\alpha A(t)/t$, and further for $t\geq u$, jumps of size $1$ occur with intensity $\bar p(\alpha+1)A(t)/t$.  In other words, its infinitesimal generator at time $t\geq u$ is given by
\begin{eqnarray}\label{E:genA}
& &\lim_{\varepsilon\to 0+} \frac{E_u\left(f(A(t+\varepsilon))\mid A(t)=a\right)-f(a)}{\varepsilon}\\ \nonumber
&=&-\alpha \frac{a}{t} f'(a) + 
\bar p(\alpha+1)\frac{a}{t}  (f(a+1)-f(a)),
\end{eqnarray}
for any $a>0$ and any bounded and continuously differentiable function $f: (0,\infty)\to \R$. The slope and jump rates being bounded, the existence and uniqueness of this process are immediate.
We further denote the counting process of the jumps of $A$ by
$$N(t)=\mathrm {Card} \{0< s \leq t: A(t)=A(t-)+1\}.$$

We shall now check that the distribution of the pair of processes $(A,N)=(A(t), N(t))_{t\geq 0}$ under $P_u$ arises as  the weak limit of a time-rescaled version of the attraction and occurence counting.

\begin{lemma} \label{L1}
\begin{enumerate}
\item[(i)] Let $(j(n))_{n\geq 1}$  be a sequence with  $j(n)/n\to u \in(0,1)$. The conditional distribution of the pair of  time-rescaled processes 
$$(A_{j(n)}(\lfloor t n\rfloor),N_{j(n)}(\lfloor t n\rfloor))_{ t\geq 0}$$
 given that the $j(n)$-th word is new 
converges in the sense of Skorohod  towards the law of $(A,N)$ under $P_u$.

\item[(ii)]  Let $(k(n))_{n\geq 1}$ be a second sequence with $k(n)/n\to v\in(0,1)$ and $j(n)\neq k(n)$ for all $n$ sufficiently large.
The joint conditional distribution of the two pairs of time-rescaled counting processes 
$$(A_{j(n)}(\lfloor t n\rfloor),N_{j(n)}(\lfloor t n\rfloor))_{ t\geq 0} \quad \text{and} \quad  (A_{k(n)}(\lfloor t n\rfloor),N_{k(n)}(\lfloor t n\rfloor))_{ t\geq 0}$$
 given  that the $j(n)$-th and the $k(n)$-th words are both new 
converges in the sense of Skorohod  towards the law of two pairs of independent processes 
distributed as  $(A,N)$ under $P_u$ and under $P_v$, respectively.

\end{enumerate}

\end{lemma}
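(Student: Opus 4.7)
The plan is to recognise that, conditionally on the $j(n)$-th word being new, the pair $(A_{j(n)}(k), N_{j(n)}(k))_{k\geq j(n)}$ is an inhomogeneous Markov chain in $k$, and then to verify that the generator of its time-rescaling converges to the generator \eqref{E:genA} of $(A,N)$ under $P_u$. From the definition of $A_j$ one reads the basic recursion
\[
A_{j}(k{+}1) \;=\; \bigl(k/(k{+}1)\bigr)^{\alpha}\, A_{j}(k) + \Ind_{\{w_{k+1}=w_{j}\}},\qquad N_{j}(k{+}1) = N_{j}(k) + \Ind_{\{w_{k+1}=w_{j}\}},
\]
while the conditional probability of a repetition satisfies
\[
\PP(w_{k+1}=w_{j}\mid \text{history at step }k) \;=\; \bar p \,\frac{k^{\alpha}A_{j}(k)}{\sum_{i=1}^{k}i^{\alpha}},
\]
a function of $(k,A_{j}(k))$ only. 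This gives the Markov property and an explicit one-step transition.

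\textbf{Generator convergence and tightness.} Rescaling time through $k=\lfloor tn\rfloor$ and using the asymptotics $(k/(k{+}1))^{\alpha}=1-\alpha/k+O(k^{-2})$ together with $\sum_{i=1}^{k}i^{\alpha}=k^{\alpha+1}/(\alpha+1)+O(k^{\alpha})$, a direct Taylor expansion shows that for any bounded $f\colon(0,\infty)\times\N\to\R$ that is $C^{1}$ in the first variable,
\[
n\,\E\!\left[f(A_{j}(k{+}1),N_{j}(k{+}1)) - f(a,m) \,\middle|\, A_{j}(k)=a,\ N_{j}(k)=m\right]
\]
converges, uniformly on compact sets of $(a,m,t)$ with $t$ bounded away from $0$, to
\[
-\alpha\,\frac{a}{t}\,\partial_{a}f(a,m) \;+\; \bar p(\alpha{+}1)\,\frac{a}{t}\,\bigl(f(a{+}1,m{+}1)-f(a,m)\bigr),
\]
which is exactly the generator of the target process $(A,N)$. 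The initial state $(A_{j(n)}(j(n)),N_{j(n)}(j(n)))=(1,0)$ at rescaled time $j(n)/n\to u$ matches the birth condition in $P_u$. Tightness in the Skorohod topology follows from the observations that jumps of $A$ are of size at most one with intensity $\bar p(\alpha+1)A/t$ uniformly bounded as long as $A$ is, and that the multiplicative factor $(k/(k{+}1))^{\alpha}<1$ contracts $A$ between jumps, so moments of $\sup_{s\leq T} A_{j(n)}(\lfloor sn\rfloor)$ are easily controlled on any $[u,T]$. A standard Markov chain convergence theorem then yields (i).

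\textbf{Joint case.} For (ii), the crucial observation is that at each step $k+1$ the events $\{w_{k+1}=w_{j(n)}\}$ and $\{w_{k+1}=w_{k(n)}\}$ are mutually exclusive, so the two attractions never jump simultaneously. Conditionally on both $w_{j(n)}$ and $w_{k(n)}$ being new, the joint process $\bigl((A_{j(n)},N_{j(n)}),(A_{k(n)},N_{k(n)})\bigr)$ remains an inhomogeneous Markov chain, and the one-step generator computation of the preceding paragraph goes through verbatim for each marginal, with a cross term that is $O(n^{-1})$ because simultaneous repetitions are impossible and the individual jump intensities depend only on each word's own current attraction. The rescaled joint generator therefore converges to the sum of the two independent generators governing $P_u$ and $P_v$, and tightness is inherited from (i). Applying the same Markov convergence theorem to the joint martingale problem yields the claimed independent limits. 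The main obstacle is making the generator convergence truly uniform near the starting time, where the coefficient $a/t$ degenerates as $t\downarrow u$; this is handled by exploiting the assumption $u>0$ and working on intervals $[u,T]$ for arbitrary $T<\infty$.
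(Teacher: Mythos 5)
Your proposal is correct and follows essentially the same route as the paper: identify the pair (attraction, occurrence count) as an inhomogeneous Markov chain with the explicit one-step transitions, show the rescaled generator converges to \eqref{E:genA}, and conclude by a standard Markov chain approximation theorem (the paper cites Theorem 19.28 of Kallenberg), with part (ii) handled by the same joint-chain observation the paper leaves to the reader. One small slip: the chain starts from $(1,1)$, not $(1,0)$, since by definition $N_{j(n)}(j(n))=1$ (the word counts as its own first occurrence), which matches $N(u)=1$ under $P_u$ because the jump of $A$ from $0$ to $1$ at time $u$ is counted in $N$; this does not affect the rest of your argument.
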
 
\begin{proof} 
 We shall only prove the first assertion, the argument for the second is similar but with heavier notation and details are left to scrupulous readers.

We see from from the dynamics of the $q$-weighted Simon's model that, given that the $j$-th word is new,  the pair $((A_j(\ell), N_j(\ell)): \ell \geq j)$ is an inhomogeneous Markov chain started at time $j$ from $(1,1)$, with probability transitions given for any  $\ell\geq j$, $a>0$ and $k\geq 1$ by 
\begin{eqnarray*}
& &\PP\left( A_j(\ell+1) =\frac{\ell^{\alpha} a}{(\ell+1)^{\alpha}} +1, N_j(\ell+1) = k+1 \mid A_j(\ell)= a,  N_j(\ell)=k\right)\\
&=& \bar p  \ell^{\alpha}a/s_{\ell}
\end{eqnarray*}
and 
\begin{eqnarray*}
& &\PP\left( A_j(\ell+1) =\frac{\ell^{\alpha} a}{(\ell+1)^{\alpha}}, N_j(\ell+1) = k \mid A_j(\ell)= a,  N_j(\ell)=k\right)\\
&=& 1-\bar p  \ell^{\alpha}a/s_{\ell},
\end{eqnarray*}
where 
$$s_{\ell}=\sum_{i=1}^{\ell} i^{\alpha} \sim \ell^{\alpha+1}/(\alpha +1).$$

Then take $j=j(n)$ and $\ell=\ell(n)\sim tn$. 
It follows that for every bounded function $f:(0,\infty)\times \N \to \R$ which is continuously differentiable in the first variable,
\begin{eqnarray*}
&\lim_{n\to \infty}& n \E\left(f((A_{j}, N_{j})(\ell+1))- f(a,k)\mid (A_{j}, N_{j})(\ell)=(a,k)\right)\\
&=&-\alpha \frac{a}{t} \frac{\partial f}{\partial a}(a,k) + 
\bar p(\alpha+1)\frac{a}{t}  (f(a+1,k+1)-f(a,k)).
\end{eqnarray*}
On the other hand, the process $(A,N)$ under $\PP_u$ is an inhomogeneous Feller process, 
with $(A(t), N(t))=(0,0)$ for all $t<u$, $(A(u), N(u))=(1,1)$, 
and thanks to \eqref{E:genA},  its infinitesimal generator given by 
the right-hand side above. Our claim can now be derived from basic Markov chains approximation; see e.g. Theorem 19.28 in \cite{Kall}.
\end{proof}

\section{Connection to continuous state and Crump-Mode-Jagers branching processes}
The time-inhomogeneity of the Markov process $A$ introduced in Section 2 is essentially artificial, in the sense that $A$ actually results from a time-homogeneous Markov process by a deterministic logarithmic time substitution, as we shall now explain. Consider a time homogeneous Markov process $Z=(Z(t))_{t\geq 0}$ on $(0,\infty)$  with infinitesimal generator given for any smooth function $f:(0,\infty)\to \R$ by
\begin{equation} \label{E:genZ}
{\mathcal G}f(x)= -bx f'(x) + x(f(x+1)-f(x)),
\end{equation}
where $b\geq 0$ is some parameter.  We shall always deal with the situation when the process starts from $Z(0)=1$, and shall not  mention the initial state any further. 
The following observation is immediate by comparing \eqref{E:genZ} with \eqref{E:genA}. 

 \begin{lemma} \label{L2} 
 Take \begin{equation}\label{E:b}
 b=\frac{\alpha}{\bar p (\alpha+1)}.
 \end{equation}
For any fixed $u\in(0,1)$, the process given  by 
\begin{equation}\label{E:AZ}
\left\{ \begin{matrix} 0 \qquad &\text{for } t< u,\\
Z(\alpha b^{-1}\ln (t/u))\qquad &\text{for } t\geq u,\\
\end{matrix} \right.
\end{equation}
 has the law $P_u$. 
\end{lemma}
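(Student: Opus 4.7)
The plan is to take the process $\tilde A$ defined by the right-hand side of \eqref{E:AZ}, verify that it satisfies the three characterizing properties listed for $A$ under $P_u$ just before \eqref{E:genA} (namely, vanishing on $[0,u)$, value $1$ at time $u$, and generator given by the right-hand side of \eqref{E:genA}), and then invoke the uniqueness of the time-inhomogeneous process determined by that generator, which the author has already pointed out is immediate because slope and jump rates are bounded on any compact sub-interval of $(0,\infty)$.

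The first two properties are built into \eqref{E:AZ} together with the standing convention $Z(0)=1$. For the generator, introduce the deterministic time change $\tau(t)=\alpha b^{-1}\ln(t/u)$ for $t\geq u$, so that $\tilde A(t)=Z(\tau(t))$ and $\tau'(t)=\alpha/(bt)$. Between jumps, the generator \eqref{E:genZ} of $Z$ forces the ODE $Z'(s)=-bZ(s)$ on the continuous part of the trajectory; the chain rule then gives
\[
\tilde A'(t)=Z'(\tau(t))\,\tau'(t)=-bZ(\tau(t))\cdot\frac{\alpha}{bt}=-\alpha\,\frac{\tilde A(t)}{t},
\]
matching the drift term in \eqref{E:genA}. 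From \eqref{E:genZ} the jumps of $Z$ have size $1$ and occur with state-dependent rate $Z(s)$; a deterministic time change multiplies this rate by $\tau'(t)$, so $\tilde A$ has unit upward jumps at time $t$ with intensity $Z(\tau(t))\,\tau'(t)=\alpha\,\tilde A(t)/(bt)$. Equating this with the jump intensity $\bar p(\alpha+1)\tilde A(t)/t$ in \eqref{E:genA} gives $\alpha/b=\bar p(\alpha+1)$, i.e.\ precisely \eqref{E:b}.

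Assembling these pieces, the process $\tilde A$ is a time-inhomogeneous Markov process whose infinitesimal generator at time $t\geq u$ acts on any test function $f$ exactly as the right-hand side of \eqref{E:genA}, and its paths are c\`adl\`ag by construction from those of $Z$. There is no real obstacle: the argument reduces to the chain rule for the continuous part and a change-of-variable for the Poissonian jump intensity, together with the standard fact that a smooth deterministic time change of a time-homogeneous Markov process yields a time-inhomogeneous Markov process with generator rescaled by $\tau'$. The only bookkeeping to watch is that $\tau$ is a bijection $[u,\infty)\to[0,\infty)$ with $\tau(u)=0$, which is what ensures $\tilde A(u)=Z(0)=1$ and that the new clock covers all of $[u,\infty)$.
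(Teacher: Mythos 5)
Your proposal is correct and is essentially the paper's own argument: the author declares the lemma ``immediate by comparing \eqref{E:genZ} with \eqref{E:genA}'', and your write-up simply spells out that comparison (chain rule for the drift, rescaling of the jump intensity by $\tau'(t)=\alpha/(bt)$, and matching rates to obtain \eqref{E:b}), together with the uniqueness remark the paper already made. No discrepancy to report.
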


The process $Z$ is a simple instance of a continuous state branching process (in short, CSBP). We now recall some basic features in this setting and refer to Section VI.6 in \cite{AtNey} or Chapter 12 in \cite{Kypr} for background. Lamperti described a construction of general CSBPs from L\'evy processes  with no negative jumps (see Theorem 12.2 in \cite{Kypr}),  which we specialize to our setting.

Consider first  a standard Poisson process $(\eta_t)_{t\geq 0}$  started from $\eta_0=1$, and then
$$\xi_t=\eta_t-bt \qquad \text{ for every }t\geq 0.$$
 So $(\xi_t)_{t\geq 0}$ is a L\'evy process with no negative jumps   started from $\xi_0=1$,
and more precisely, all its jumps have unit size. Next   write 
$$\zeta=\inf\{t>0: \xi_t=0\}$$ for the first hitting time $0$, with the usual convention $\inf\varnothing = \infty$. 
 The indefinite integral $t\mapsto \int_0^t \dd s/\xi_s$ yields a bijection from $[0,\zeta)$ to $[0,\infty)$; we denote the inverse bijection  by $T(\cdot)$. Then the time-changed process $(\xi_{T(t)})_{t\geq 0}$ is  a version of $Z$. 
 
 Conversely,  one notes the identity
 $$T(t)=\int_0^t Z(s)\dd s,$$
 so that if we define
\begin{equation}\label{E:eta}
B(t)= Z(t)+b \int_0^t Z(s)\dd s,
\end{equation}
then  we can identify $B(t)=\eta_{T(t)}$. That is, $B(t)$ is  the number of birth events (i.e. the number of jumps of $Z$) occurring on the time-interval $[0,t]$, agreeing that $t=0$ is viewed as the first birth event (i.e. jump time of $Z$). We now derive from Lemma \ref{L2} the following:
 
\begin{cor} \label{C2} Let $b$ be given by \eqref{E:b} and fix any $u\in(0,1)$. In the notation above, the process defined by 
\begin{equation}\label{E:AZ}
\left\{ \begin{matrix} 0 \qquad &\text{for } t< u,\\
B(\alpha b^{-1}\ln (t/u))) \qquad &\text{for } t\geq u,\\
\end{matrix} \right.
\end{equation}
 has the same  law as $N=(N(t))_{t\geq 0}$ under $P_u$. 
\end{cor}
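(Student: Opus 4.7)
The corollary essentially transports Lemma \ref{L2} from the process $A$ to its jump-counting functional, so my strategy is to realise $A$ and $Z$ on a common probability space via that lemma and verify that $N$ transports correctly to $B$ under the deterministic time substitution $\theta(s) := \alpha b^{-1}\ln(s/u)$. The key point is that $\theta:[u,\infty)\to[0,\infty)$ is continuous and strictly increasing with $\theta(u)=0$, so it induces an order- and size-preserving bijection between the jumps of $A$ on $(u,\infty)$ and the jumps of $Z$ on $(0,\infty)$.

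First I would dispose of $t<u$: under $P_u$ we have $A(s)=0$ for $s<u$, whence $N(t)=0$, matching the prescribed process in \eqref{E:AZ}. At $t=u$ itself, $A$ jumps from $A(u-)=0$ to $A(u)=1$ by the initial-condition convention, so $N(u)=1$; on the other hand $B(\theta(u))=B(0)=1$ by the convention that ``$t=0$ is viewed as the first birth event''. Checking this initial-jump bookkeeping is the only mildly subtle point — the jump of $A$ at $u$ has no counterpart in the interior of $Z$, but is exactly absorbed by the stipulation $B(0)=1$.

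For $t\geq u$, Lemma \ref{L2} lets us assume $A(s)=Z(\theta(s))$ for all $s\geq u$ on a common space. Writing $J(r)$ for the number of jumps of $Z$ on $(0,r]$, the bijection induced by $\theta$ gives
$$N(t)=1+J(\theta(t)),\qquad t\geq u.$$
On the $Z$ side, the Lamperti representation $B(r)=\eta_{T(r)}$ together with $\eta_0=1$ yields $B(r)=1+J(r)$, since each jump of $Z$ is inherited from a unit jump of $\eta$ while the drift of $\eta$ and the time change $T$ contribute only continuously; this is also visible from \eqref{E:eta} by reading off the jump part of $B$. Combining these two identities gives $N(t)=B(\theta(t))$ a.s.\ for all $t\geq u$ on the common probability space, and the equality in distribution asserted in the statement follows at once.
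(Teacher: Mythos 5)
Your argument is correct and is essentially the route the paper intends: the corollary is stated as an immediate consequence of Lemma \ref{L2} together with the identification $B(t)=\eta_{T(t)}$ as the birth-event count, and you simply spell out the jump-counting bookkeeping (the monotone time substitution matching jumps of $A$ on $(u,\infty)$ with jumps of $Z$, plus the initial convention $N(u)=1=B(0)$) that the paper leaves implicit. Nothing is missing; your identity $B(r)=1+J(r)$, read off from \eqref{E:eta} or from the Lamperti construction, is exactly the point being used.
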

   
   As we shall see later on, the tail estimates in Theorem \ref{T1}(ii-iii) depend crucially on 
  the behavior of $N(1)$ under $P_u$ as $u\to 0+$.   Corollary \ref{C2} enables us to translate this question in terms of the asymptotic behavior of $B(t)$ as $t\to \infty$. In this direction, it is well-known 
 that, loosely speaking, the large time asymptotic behavior of a supercritical branching process depends on integrability properties of a remarkable martingale. In the present setting, the CSBP $Z$ is supercritical if and only if $b<1$, and we claim the following. 
 
\begin{lemma} \label{L3}
 When $b<1$, the process 
 $$W_t=\e^{-(1-b)t} Z(t), \qquad t\geq 0$$
  is a martingale which is bounded in $L^k(\PP)$ for all $k\geq 1$. 
\end{lemma}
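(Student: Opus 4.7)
\medskip

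\textbf{Plan.} The strategy splits into two parts: establish the martingale property by a direct generator calculation, then bound $\E[W_t^k]$ by induction on $k$, exploiting the fact that $\mathcal G$ maps each monomial $x^k$ to a polynomial of degree $k$ with explicit leading coefficient $k(1-b)$.

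\medskip

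For the martingale property, apply $\mathcal G$ from \eqref{E:genZ} to $f(x)=x$:
\[
\mathcal G(\mathrm{id})(x) \;=\; -bx + x\bigl((x+1)-x\bigr) \;=\; (1-b)x.
\]
Localizing at $\tau_M=\inf\{t:Z(t)\geq M\}$ (unit jump size ensures $Z(\cdot\wedge\tau_M)\leq M+1$), Dynkin's formula shows that $Z(t\wedge\tau_M) - (1-b)\int_0^{t\wedge\tau_M} Z(s)\,\dd s$ is a bounded martingale. Gronwall applied to $\E[Z(t\wedge\tau_M)]$ followed by monotone convergence as $M\to\infty$ yields $\E[Z(t)]=\e^{(1-b)t}$, and the Markov property gives $\E[Z(s+t)\mid \mathcal F_s]=\e^{(1-b)t}Z(s)$, so $W_t=\e^{-(1-b)t}Z(t)$ is indeed a martingale.

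\medskip

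For the $L^k$ bound I would induct on $k$, with base case $k=1$ already in hand. Expanding,
\[
\mathcal G(x^k) \;=\; -kb\,x^k + x\sum_{j=0}^{k-1}\binom{k}{j}x^j \;=\; k(1-b)x^k + Q_{k-1}(x),
\]
where $Q_{k-1}(x)=\sum_{\ell=1}^{k-1}\binom{k}{\ell-1}x^\ell$ has degree $k-1$ and nonnegative coefficients. Dynkin's formula (again after localization) gives, for $m_k(t):=\E[Z(t)^k]$, the linear ODE
\[
m_k'(t) \;=\; k(1-b)\,m_k(t) + \E[Q_{k-1}(Z(t))], \qquad m_k(0)=1.
\]
By the induction hypothesis, $m_j(t)\leq C_j\e^{j(1-b)t}$ for every $j<k$, so the forcing term is $O(\e^{(k-1)(1-b)t})$. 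Solving,
\[
m_k(t) \;=\; \e^{k(1-b)t}\!\left(1+\int_0^t \e^{-k(1-b)s}\,\E[Q_{k-1}(Z(s))]\,\dd s\right) \;\leq\; C_k\,\e^{k(1-b)t},
\]
the integral being finite because $b<1$ makes the integrand decay like $\e^{-(1-b)s}$. Dividing by $\e^{k(1-b)t}$ yields $\sup_{t\geq 0}\E[W_t^k]\leq C_k$.

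\medskip

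The main technical obstacle is propagating finiteness of moments through the induction, since a priori $\E[Z(t)^k]$ could be infinite. I would handle this throughout by working with the stopped process $Z(\cdot\wedge\tau_M)$, which is bounded by $M+1$, deriving both Dynkin's identity and the moment ODE at level $M$, and then letting $M\to\infty$ via Fatou on the left-hand side and dominated convergence on the right. Non-explosion of the CSBP $Z$ (so that $\tau_M\to\infty$ a.s.) is standard given the subcritical-like branching mechanism $\Psi(\lambda)=b\lambda+\e^{-\lambda}-1$, which is Lipschitz at infinity. Once finiteness of moments is secured, the induction proceeds cleanly and the $L^k$-bound is automatic.
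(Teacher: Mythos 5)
Your proof is correct and follows essentially the same route as the paper: compute $\mathcal G(x^k)=k(1-b)x^k+(\text{lower-order terms with positive coefficients})$, feed this into the forward (Dynkin/Kolmogorov) equation for $m_k(t)=\E[Z(t)^k]$, and induct on $k$ using that the forcing term is of exponential order $(k-1)(1-b)$. The only differences are cosmetic or supplementary: you integrate the linear ODE by variation of constants where the paper bounds $\frac{\dd}{\dd t}\ln m_k(t)$ and invokes Jensen's inequality, and you add the localization/non-explosion justification for finiteness of moments, which the paper leaves implicit.
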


 \begin{proof} That $W$ is  a martingale is well-known and indeed immediate from \eqref{E:genZ}. We shall now check by induction that for every $k\geq 1$, there exists some constant $c_k<\infty$ such that
 \begin{equation} \label{E:rec}
 \E(Z(t)^k)  \leq  c_k \e^{k(1-b)t}\qquad \text{for all }t\geq 0.
 \end{equation} 
 For $k=1$, \eqref{E:rec} is actually an equality with $c_1=1$, thanks to the martingale property of $W$. Let us now assume that  for some $\ell \geq 2$,  \eqref{E:rec}  holds for all $k=1, \ldots, \ell-1$. Take $f(x)=x^{\ell}$, so that  
 $${\mathcal G}f(x) = -\ell b x^{\ell}  +x \sum_{j=0}^{\ell-1} {\ell \choose j}  x^j
 = \ell(1-b) x^{\ell} + \sum_{j=0}^{\ell-2} {\ell \choose j}  x^{j+1}. $$
Combining Kolmogorov's forward equation
 $$ \frac{\dd }{ \dd t} \E(f(Z(t)))=  \E({\mathcal G}f(Z(t)))$$
 with  \eqref{E:rec}, 
 we deduce  that for some $\gamma>0$, there is the inequality
  $$ \frac{\dd }{ \dd t} \ln  \E(Z(t)^{\ell}) \leq  \ell(1-b)   +  \gamma \frac{ \e^{(1-b)(\ell-1)t}}{\E(Z(t)^{\ell})} . $$
 
 On the other hand, we know from Jensen's inequality that 
 $$\E(Z(t)^{\ell})\geq \exp(\ell(1-b)t)\qquad \text{for all }t\geq 0,$$
 and hence 
 $$\int_{0}^{\infty} \frac{ \e^{(1-b)(\ell-1)t}}{\E(Z(t)^{\ell})} \dd t \leq \int_{0}^{\infty}  \e^{-(1-b)t} \dd t= 1/(1-b).$$
 We conclude that 
 $$\E(Z(t)^{\ell})\leq 
 \e^{\gamma/(1-b)} \exp(\ell(1-b)t)\qquad \text{for all }t\geq 0.$$
So \eqref{E:rec} also holds for $k=\ell$ and the proof is complete. 
   \end{proof} 
      
 We write $W_{\infty}$ for  the terminal value of the martingale $W$ in Lemma \ref{L3} and $W^*=\sup_{t\geq 0} W_t$ for its overall supremum. We immediately deduce the following strong limit theorem 
  for  the number of birth events $B$. 
  
   \begin{cor} \label{C3}  When $b<1$, there is the convergence 
   $$\lim_{t\to \infty} \e^{-(1-b)t}B(t) = \frac{1}{1-b} W_{\infty}, \qquad \text{a.s.}$$
   Further,
   $$\sup_{t\geq 0} \e^{-(1-b)t}B(t) \leq \frac{1}{1-b} W^* \in L^k(\PP) \qquad \text{for all }k\geq 1.$$

   \end{cor}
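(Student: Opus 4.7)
The plan is to reduce everything to the martingale convergence and maximal inequality for $W$ via the explicit formula \eqref{E:eta}. Dividing that identity by $\e^{(1-b)t}$ gives the key decomposition
\[
\e^{-(1-b)t}B(t) \;=\; W_t \;+\; b\,\e^{-(1-b)t}\int_0^t \e^{(1-b)s} W_s\,\dd s,
\]
since $Z(s)=\e^{(1-b)s}W_s$. By Lemma \ref{L3}, $W$ is a nonnegative martingale bounded in $L^k(\PP)$ for every $k\geq 1$, hence by Doob's convergence theorem it converges almost surely and in every $L^k$ to a finite limit $W_\infty$.

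For the almost sure convergence, I would fix $\omega$ in the full-probability set where $W_s\to W_\infty$, then use an elementary Cesàro-type estimate: given $\varepsilon>0$, choose $T$ so that $|W_s-W_\infty|<\varepsilon$ for $s\geq T$; split the integral $\int_0^t = \int_0^T + \int_T^t$, observe that the first piece contributes $O(\e^{-(1-b)t})$, and for the second use
\[
\left|\int_T^t \e^{(1-b)s}(W_s-W_\infty)\,\dd s\right| \;\leq\; \varepsilon\,\frac{\e^{(1-b)t}-\e^{(1-b)T}}{1-b}.
\]
Multiplying by $\e^{-(1-b)t}$ shows that the second term in the decomposition converges to $bW_\infty/(1-b)$, so the whole sum converges to $W_\infty + bW_\infty/(1-b) = W_\infty/(1-b)$, as claimed.

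For the uniform upper bound, I would just replace $W_s$ by $W^*$ throughout the same decomposition to obtain, for every $t\geq 0$,
\[
\e^{-(1-b)t}B(t) \;\leq\; W^* + b\,W^*\,\e^{-(1-b)t}\int_0^t \e^{(1-b)s}\,\dd s \;\leq\; W^*\left(1+\frac{b}{1-b}\right)=\frac{W^*}{1-b}.
\]
Finally, to show $W^*\in L^k(\PP)$ for every $k\geq 1$, I would apply Doob's $L^p$ maximal inequality to the nonnegative martingale $W$: for any $k>1$, $\|W^*\|_k \leq \tfrac{k}{k-1}\sup_{t}\|W_t\|_k<\infty$ by Lemma \ref{L3}, and this handles $k=1$ as well by inclusion of $L^p$ spaces on the probability space. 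There is no real obstacle here; the only thing to handle carefully is the swap between the averaging identity and the a.s.\ limit, which is settled by the elementary splitting argument above.
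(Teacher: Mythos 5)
Your proof is correct and follows essentially the same route as the paper: divide the identity \eqref{E:eta} by $\e^{(1-b)t}$, pass to the limit using the a.s.\ convergence and $L^k$-boundedness of $W$ from Lemma \ref{L3}, bound the prefactor crudely by $W^*$ for the supremum estimate, and get $W^*\in L^k(\PP)$ from Doob's maximal inequality. The only (immaterial) difference is that the paper performs the time reversal $s\mapsto t-s$ in the integral and invokes dominated convergence with dominating function $W^*\e^{-(1-b)s}$, whereas you carry out the equivalent elementary $\varepsilon$-splitting directly.
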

  \begin{proof}  Writing 
  $$\e^{-(1-b)t}\left( Z(t)+ b \int_0^t Z(s)\dd s\right)= W_t + b\int_0^{\infty} \Ind_{\{s\leq t\}}  W_{t-s} \e^{-(1-b)s} \dd s,$$
yields the first claim by dominated convergence. The inequality of the second claim is obvious 
and the assertion  that $W^* \in L^k(\PP)$ follows from Lemma \ref{L3} and Doob's inequality. \end{proof}

 \begin{rem} In the case $b=0$, $Z$ is the standard Yule process, and for each $t$, $Z(t)$ has thus the geometric distribution with parameter $\e^{-t}$. Theoretically, the calculation in the proof of Lemma \ref{L3} allows us to compute inductively the entire moments of $Z(t)$ for any $b> 0$, and then also those of $B(t)$ by standard methods. Because $Z$ is stochastically dominated by a Yule process, the moment problem is determinate. So in theory, this approach enables to characterize the law of $B(t)$; however I have not been able to get an explicit formula.
 \end{rem}

 The final lemma of this section deals with the sub-critical case, and will provide the key to Theorem \ref{T1}(iii).

\begin{lemma} \label{L4}  When $b> 1$,  $Z$ is subcritical, $\lim_{t\to \infty}Z_t=0$ a.s. and 
one has
 $$\E \left( \exp\left( cb \int_0^{\infty} Z(s)\dd s\right) \right) = b $$ 
 for $c=\ln  b+b^{-1} -1>0$. 
 \end{lemma}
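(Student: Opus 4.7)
The strategy is to exploit the Lamperti representation introduced in Section 3 and reduce the statement to a classical first-passage computation for the spectrally positive L\'evy process $\xi_t = 1 + \eta_t - bt$. When $b > 1$, the strong law of large numbers gives $\xi_t/t \to 1 - b < 0$, so $\zeta < \infty$ almost surely and, since $\xi$ has only positive jumps, $\xi_{\zeta} = 0$. The Lamperti identity $Z(t) = \xi_{T(t)}$ with $T(t) \uparrow \zeta$ then forces $Z(t) \to 0$ a.s., and letting $t \to \infty$ in $T(t) = \int_0^t Z(s)\,\dd s$ yields the crucial identity $\int_0^{\infty} Z(s)\,\dd s = \zeta$, so the claim reduces to $\E[\exp(cb\,\zeta)] = b$.

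For this moment identity I would use the exponential martingale of $\xi$. The Laplace exponent of $\xi_t - 1 = \eta_t - bt$ is $\psi(\lambda) = \e^{\lambda} - 1 - b\lambda$, convex on $\R$, with unique minimum at $\lambda^{\star} = \ln b$ and $\psi(\ln b) = b - 1 - b\ln b = -bc$. Consequently
\begin{equation*}
M_t = \exp\bigl(\lambda^{\star}(\xi_t - 1) - t\,\psi(\lambda^{\star})\bigr) = b^{\xi_t - 1}\,\e^{cbt}
\end{equation*}
is a positive mean-one martingale, and optional stopping at the bounded time $\zeta \wedge T$ combined with $\xi_{\zeta} = 0$ gives
\begin{equation*}
1 = b^{-1}\,\E\bigl[\e^{cb\zeta}\,\Ind_{\{\zeta \le T\}}\bigr] + \E\bigl[M_T\,\Ind_{\{\zeta > T\}}\bigr].
\end{equation*}
Monotone convergence sends the first summand to $b^{-1}\,\E[\e^{cb\zeta}]$ as $T \to \infty$.

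The main obstacle is the second summand: $M_T$ is unbounded on $\{\zeta > T\}$, so a dominated-convergence argument is not directly available. I would handle this via an Esscher change of measure, setting $\dd\tilde{\PP}/\dd\PP|_{\mathcal F_t} = M_t$. Under $\tilde{\PP}$ the canonical process is still of the form $1 + \tilde\eta_t - bt$, but now with $\tilde\eta$ a Poisson process of intensity $b$ (this is read off the shifted Laplace exponent $\tilde\psi(\mu) = \psi(\mu + \ln b) - \psi(\ln b) = b(\e^{\mu} - \mu - 1)$). Because its mean drift $b - b$ vanishes, the process $\tilde\xi$ is a critical L\'evy process and therefore oscillates, so $\zeta < \infty$ $\tilde{\PP}$-a.s. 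Since $\{\zeta > T\} \in \mathcal F_T$, we conclude
\begin{equation*}
\E\bigl[M_T\,\Ind_{\{\zeta > T\}}\bigr] = \tilde{\PP}(\zeta > T) \longrightarrow 0 \quad \text{as } T \to \infty,
\end{equation*}
and together with the preceding display this yields $\E[\e^{cb\zeta}] = b$, as required.
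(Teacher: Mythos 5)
Your argument is correct and follows essentially the same route as the paper: reduce via Lamperti's time change to the identity $\int_0^\infty Z(s)\,\dd s=\zeta$ and then evaluate $\E(\e^{cb\zeta})$ using the Laplace exponent of $\xi$ at its minimizer. The only difference is that where the paper simply invokes the known first-passage formula for spectrally one-sided L\'evy processes (Theorem 3.12 in \cite{Kypr}), you reprove that formula from scratch via the exponential martingale $M_t=b^{\xi_t-1}\e^{cbt}$, optional stopping at $\zeta\wedge T$, and an Esscher tilt to kill the term on $\{\zeta>T\}$ -- a correct, self-contained rendering of the standard proof of that cited result.
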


 \begin{proof} The first two assertions should be plain, and we  just need to establish the displayed identity.
 To start with, we recall from Lamperti's transformation that the integral $\int_0^{\infty} Z(s)\dd s$
 coincides with  the hitting time $\zeta$  of $0$ by the L\'evy process $\eta$.

 The Laplace exponent $\psi$ of the L\'evy process $\xi$ is $\psi(\lambda)=b \lambda-1+\e^{-\lambda}$ for $\lambda\in\R$, meaning that $\E(\exp(-\lambda (\xi_t-1)))=\exp( t\psi(\lambda))$. This function reaches its minimum at 
 $-\ln  b<0$, with $\min \psi = \psi(-\ln  b)=-b \ln  b -1+b<0$. 
 It follows from a well-known formula for the Laplace transform of first-passage times (see e.g. Theorem 3.12 in \cite{Kypr}) that
  $$\E \left( \exp((b\ln  b+1-b)\zeta \right) = \exp(\ln  b)=b,$$
    which establishes our claim.
         \end{proof} 
         
 We now conclude this section by pointing at a connection with another branching process, namely of Crump-Mode-Jagers (in short, C-M-J) type. Recall that a C-M-J branching process is a model for the evolution of a population in continuous time, where individuals beget children according to independent copies of a point process $\Xi$ on $(0,\infty)$ and locations of atoms are interpreted as birth of children. In other words, a C-M-J process can be constructed from a branching random walk on $[0,\infty)$ with reproduction law given by the distribution of $\Xi$, by viewing any atom, say located at $s$, at any generation of the branching random walk, as an individual born at time $s$ in the C-M-J process. Then individuals alive at time $t$ in the C-M-J process correspond to locations $s\leq t$ of atoms  in the branching random walk. 

Now take for $\Xi$ a Poisson point measure with intensity $\e^{-bt} \dd t$, and assign to each individual
in the C-M-J process a size which decays exponentially with time. Specifically the size  of an individual at age $a$ is $\e^{-ab}$, so that each individual in the C-M-J process begets children at rate given precisely by its current size. It follows that, if we further assume that the C-M-J  process starts from a single ancestor at time $0$, then
 the process describing the sum of sizes of individuals as a function of time, is a version of the CSBP $Z$.
 As a consequence, the process $(B(t))_{t\geq 0}$ has the same law as the process of the number of individuals alive of the population in this C-M-J process. In this framework, the so-called Malthus exponent  is readily identified with $1-b$, and the martingale $W$ is known as the intrinsic martingale. We refer to \cite{Nerman, Jagers} which provide in particular a criterion for  uniformly integrability of intrinsic martingales and demonstrate the importance of their roles in limit processes for C-M-J branching processes. Notably, the first part of Corollary \ref{C3} can also be derived from Theorem 5.4 in \cite{Nerman}.

\section{Proof of Theorem \ref{T1}}
We have now all the ingredients needed for the proof of Theorem \ref{T1}. To start with, recall that  for every $\ell \geq 1$,
$$\nu_n(\ell)= \sum_{i=1}^n \Ind_{\{N_i(n)=\ell\}}$$  denotes the number of different words which have been repeated exactly $\ell$ times when the string reaches the length $n$. 
Recall also from Section 2 the definition of the law $P_u$ for any $u\in(0,1)$, and of the counting process of jumps $N$.
We now define a probability mass function $\varphi$ on $\N$ by
$$\varphi(\ell)=\int_0^1 P_u(N(1)=\ell)\dd u \qquad \text{for every }\ell \geq 1.$$

With these notation at hand, we easily deduce Theorem \ref{T1}(i) from Lemma \ref{L1}
by first and second moment calculations,.

\begin{proof}[Proof of Theorem \ref{T1}(i)]  Let $U$ be a uniform random variable on $(0,1)$ independent of the weighted Simon's model.
For every $n\geq 1$, write $j(n)=\lceil nU\rceil$, so that $j(n)$ is uniformly distributed on $\{1, \ldots, n\}$ and $j(n)/n\to U$. 
Recall that for every $j\geq 2$, the probability that the $j$-th word is new equals $p$ and that $N_j(n)=0$ if the $j$-th word is a repetition. We immediately deduce from
Lemma \ref{L1}(i) that 
$$\lim_{n\to \infty} \frac{1}{np} \E\left(\sum_{j=1}^n \Ind_{\{N_j(n)=\ell\}}\right) =\int_0^1 P_u(N(1)=\ell)\dd u.$$

Next,  let $V$ be a second uniform random variable on $(0,1)$, independent of $U$ and the weighted Simon's model, and set $k(n)=\lceil nV\rceil$. So $k(n)$ is uniformly distributed on $\{1, \ldots, n\}$ and independent of $j(n)$, and writing
$$\left(\sum_{j=1}^n \Ind_{\{N_j=\ell\}}\right)^2 =\sum_{j=1}^n \Ind_{\{N_j(n)=\ell\}}\Ind_{\{N_k(n)=\ell\}}, $$
we deduce similarly from
Lemma \ref{L1}(ii) that 
$$\lim_{n\to \infty} \frac{1}{(np)^2} \E\left(\left(\sum_{j=1}^n \Ind_{\{N_j(n)=\ell\}}\right)^2 \right) =\left(\int_0^1 P_u(N(1)=\ell)\dd u\right)^2,$$
which establishes our claim. 
\end{proof}

The proof Theorem \ref{T1}(ii-iii) relies on the CSBP $Z$ introduced in Section 3; we henceforth let the parameter $b>0$ there be given by \eqref{E:b}. 

\begin{proof}[Proof of Theorem \ref{T1}(ii)] 
Thanks to Corollary \ref{C2}, we have 
 \begin{equation}\label{E:phiB}
 \sum_{j>k}  \varphi(j)= \int_0^1 P_u(N(1)>k)\dd u= \PP\left( B(\alpha b^{-1}\epsilon)  >k\right),
 \end{equation}
where $\epsilon$ is a random variable independent of  $B$ and with the standard exponential distribution. 

Next define for every $t\geq 0$ the variables
$$\beta^*(t)=\sup_{s\geq t} \e^{-(1-b)s}B(s)\quad \text{and} \quad \beta_*(t)=\inf_{s\geq t} \e^{-(1-b)s}B(s),$$
and observe from Corollary \ref{C3} that for any $k\geq 1$, we have by dominated convergence
\begin{equation} \label{E:bbb}
\lim_{t\to \infty} \beta_*(t) = \lim_{t\to \infty} \beta^*(t)= \frac{1}{1-b} W_{\infty} \qquad \text{in }L^k(\PP).
\end{equation}

We first treat the lower-bound. 
Fix $t>0$. By the definition of $\beta_*(t)$, the event $B(\alpha b^{-1}\epsilon)  >k$ holds whenever
$$\alpha b^{-1}\epsilon\geq t \quad \text{and} 
\quad \alpha b^{-1}\epsilon > (1-b)^{-1} \ln \left(k/\beta_*(t)\right),$$
and hence {\em a fortiori} whenever
$$\alpha b^{-1}\epsilon> (1-b)^{-1} \ln \left(k/\beta_*(t)\right) \geq t.$$
This yields the lower bound 
$$ 
 \PP\left( B(\alpha b^{-1}\epsilon)  >k\right) \geq \PP\left(\epsilon > \frac{b}{\alpha(1-b)} \ln \left(\frac{k}{\beta_*(t)}\right)\right) - \PP\left( \beta_*(t)> k \e^{(1-b)t}\right)
$$
On the one-hand, recalling that $b$ is given by \eqref{E:b} and that $\epsilon$ is exponentially distributed and independent of $B$, we get
$$
\PP\left(\epsilon > \frac{b}{\alpha(1-b)} \ln \left(\frac{k}{\beta_*(t)}\right)\right)= 
\E\left( \beta_*(t)^{1/(\bar p(1+\alpha)-\alpha)}\right)
 k^{-1/(\bar p(1+\alpha)-\alpha)} .
 $$
On the other hand, since $\beta_*(t)\leq (1-b)^{-1}W^*\in L^{\ell}(\PP)$ for any $\ell \geq 1$ (see Corollary \ref{C3}), 
we have by the Markov's inequality
$$\PP\left( \beta_*(t)> k \e^{(1-b)t}\right) = O(k^{-\ell}).$$
Putting the pieces together, we have shown that for any $t>0$,
$$\liminf_{k\to \infty}  k^{1/(\bar p(1+\alpha)-\alpha)}\PP\left( B(\alpha b^{-1}\epsilon)  >k\right)
\geq \E\left( \beta_*(t)^{1/(\bar p(1+\alpha)-\alpha)}\right).
$$
Letting $t\to \infty$ and using \eqref{E:bbb}, we conclude that
$$\liminf_{k\to \infty}  k^{1/(\bar p(1+\alpha)-\alpha)}\PP\left( B(\alpha b^{-1}\epsilon)  >k\right)
\geq \E\left( \left( \frac{W_{\infty}}{1-b}\right)^{1/(\bar p(1+\alpha)-\alpha)}\right).
$$

We now turn our attention to the upper-bound and 
fix $t>0$. By the definition of $\beta^*(t)$, there is the inclusion of events
$$\{B(\alpha b^{-1}\epsilon)  >k\} \subset \Lambda_1\cup \Lambda_2$$
with
$$\Lambda_1=\{ \alpha b^{-1}\epsilon \geq (1-b)^{-1} \ln \left(k/\beta^*(t)\right)\} \quad
 \text{and} 
\quad \Lambda_2=\{\alpha b^{-1}\epsilon \leq  t\}.$$
Using the identity $\Lambda_1\cup \Lambda_2=\Lambda_1\cup(\Lambda_1^{\rm c}\cap \Lambda_2)$ 
and then observing that
$$\Lambda_1^{\rm c}\cap \Lambda_2 \subset \{(1-b)^{-1} \ln \left(k/\beta^*(t)\right)<t\},$$
we see that $B(\alpha b^{-1}\epsilon)  >k$ implies 
$$ \text{either} 
\quad \alpha b^{-1}\epsilon > (1-b)^{-1} \ln \left(k/\beta^*(t)\right)\quad 
 \text{or } 
\quad (1-b)^{-1} \ln \left(k/\beta^*(t)\right)<t.$$
This yields the upper-bound 
$$ 
 \PP\left( B(\alpha b^{-1}\epsilon)  >k\right) \leq \PP\left(\epsilon \geq  \frac{b}{\alpha(1-b)} \ln \left(\frac{k}{\beta^*(t)}\right)\right) + \PP\left( \beta^*(t)> k \e^{(1-b)t}\right)
$$
By the same argument as for the lower-bound, we arrive at
$$\limsup_{k\to \infty}  k^{1/(\bar p(1+\alpha)-\alpha)}\PP\left( B(\alpha b^{-1}\epsilon)  >k\right)
\leq \E\left( \beta^*(t)^{1/(\bar p(1+\alpha)-\alpha)}\right).
$$
Letting $t\to \infty$ and using \eqref{E:bbb}, we conclude that
$$\limsup_{k\to \infty}  k^{1/(\bar p(1+\alpha)-\alpha)}\PP\left( B(\alpha b^{-1}\epsilon)  >k\right)
\leq \E\left( \left( \frac{W_{\infty}}{1-b}\right)^{1/(\bar p(1+\alpha)-\alpha)}\right).
$$
\end{proof}

\begin{rem} The proof above of Theorem \ref{T1}(ii) identifies the constant $C$ there as 
$$C=\E\left( \left( \frac{W_{\infty}}{1-b}\right)^{1/(\bar p(1+\alpha)-\alpha)}\right).$$
In this direction, recall that for $\alpha=0$, one has $b=0$, so $Z$ is simply the Yule process and $W_{\infty}$ follows the standard exponential distribution. This yields $C=\Gamma(1+1/\bar p)$, which agrees with what was known for Simon's original model. 
\end{rem}

Finally, the last part of Theorem \ref{T1} follows readily from Lemma \ref{L4}. 
\begin{proof}[Proof of Theorem \ref{T1}(iii)] We take $c=\ln  b+b^{-1} -1>0$ and write $N(\infty)$ for the limit as $t\to \infty$ of the counting process
$N$. We have from the definition of $\varphi$ and the inequality $N(1) \leq N(\infty) $ that 
 $$\sum_{j=1}^{\infty} \e^{cj}  \varphi(j)\leq \int_0^1 E_u(\e^{cN(\infty)})\dd u.$$
 Since $b>1$, we know from Corollary \ref{C2} and Lemma \ref{L4} that the law of $N(\infty)$ under $P_u$ is the same as that of $b\int_0^{\infty} Z_s \dd s$. Lemma \ref{L4} further shows that
$ E_u(\e^{cN(\infty)})=b$ for any $u\in(0,1)$, and we conclude that 
 $$\sum_{j=1}^{\infty} \e^{cj}  \varphi(j)\leq  b.$$
\end{proof}

To conclude this work, we observe that the characterization \eqref{E:phiB} of the limiting mass probability function $\varphi$ in terms of a branching process extends the fact that the Yule-Simon distribution can be realized as the law of a standard Yule process evaluated at an independent exponentially distributed random time with parameter $\rho=1/\bar p$.   It is also interesting to stress that similar variables -but associated to different branching processes- also arise in \cite{GvHW}, even though the framework of \cite{GvHW} and the present one seem to be rather different
 (in particular, the degree distribution plays a key role in there, whereas  we here are rather concerned with the distribution of the total population size).

\bibliographystyle{plain}
\bibliography{SimonMem}

\begin{thebibliography}{10}

\bibitem{AtNey}
Krishna~B. Athreya and Peter~E. Ney.
\newblock {\em Branching processes}.
\newblock Springer-Verlag, New York-Heidelberg, 1972.
\newblock Die Grundlehren der mathematischen Wissenschaften, Band 196.

\bibitem{BornEb}
Stefan Bornholdt and Holger Ebel.
\newblock {W}orld {W}ide {W}eb scaling exponent from {S}imon's 1955 model.
\newblock {\em Phys. Rev. E}, 64:035104, Aug 2001.

\bibitem{CLS}
C.~Cattuto, V.~Loreto, and V.~D.~P. Servedio.
\newblock A {Y}ule-{S}imon process with memory.
\newblock {\em EPL (Europhysics Letters)}, 76(2):208, 2006.

\bibitem{DM1}
S.~N. Dorogovtsev and J.~F.~F. Mendes.
\newblock Evolution of networks with aging of sites.
\newblock {\em Phys. Rev. E}, 62:1842--1845, Aug 2000.

\bibitem{DM2}
S.N. Dorogovtsev and J.F.F. Mendes.
\newblock {\em Evolution of Networks: From Biological Nets to the Internet and
  WWW}.
\newblock Oxford University Press, 2003.

\bibitem{Durrett}
Rick Durrett.
\newblock {\em Random graph dynamics}, volume~20 of {\em Cambridge Series in
  Statistical and Probabilistic Mathematics}.
\newblock Cambridge University Press, Cambridge, 2007.

\bibitem{GvHW}
Alessandro Garavaglia, Remco van~der Hofstad, and Gerhard Woeginger.
\newblock The dynamics of power laws: fitness and aging in preferential
  attachment trees.
\newblock {\em J. Stat. Phys.}, 168(6):1137--1179, 2017.

\bibitem{Jagers}
Peter Jagers.
\newblock General branching processes as {M}arkov fields.
\newblock {\em Stochastic Process. Appl.}, 32(2):183--212, 1989.

\bibitem{Kall}
Olav Kallenberg.
\newblock {\em Foundations of modern probability}.
\newblock Probability and its Applications (New York). Springer-Verlag, New
  York, second edition, 2002.

\bibitem{Kypr}
Andreas~E. Kyprianou.
\newblock {\em Fluctuations of {L}\'{e}vy processes with applications}.
\newblock Universitext. Springer, Heidelberg, second edition, 2014.
\newblock Introductory lectures.

\bibitem{LPS}
Petr Lansky, Federico Polito, and Laura Sacerdote.
\newblock Generalized nonlinear {Y}ule models.
\newblock {\em J. Stat. Phys.}, 165(3):661--679, 2016.

\bibitem{Nerman}
Olle Nerman.
\newblock On the convergence of supercritical general ({C}-{M}-{J}) branching
  processes.
\newblock {\em Z. Wahrsch. Verw. Gebiete}, 57(3):365--395, 1981.

\bibitem{PPS}
Angelica Pachon, Federico Polito, and Laura Sacerdote.
\newblock Random graphs associated to some discrete and continuous time
  preferential attachment models.
\newblock {\em J. Stat. Phys.}, 162(6):1608--1638, 2016.

\bibitem{Poli}
Frederico Polito.
\newblock Studies on generalized {Y}ule models.
\newblock ArXiV{1803.07526}{[math.PR]}, 2018.

\bibitem{SPAB}
Ana~L. Schaigorodsky, Juan~I. Perotti, Nahuel Almeira, and Orlando~V. Billoni.
\newblock Short-ranged memory model with preferential growth.
\newblock {\em Phys. Rev. E}, 97:022132, Feb 2018.

\bibitem{Simon}
Herbert~A. Simon.
\newblock On a class of skew distribution functions.
\newblock {\em Biometrika}, 42(3/4):425--440, 1955.

\bibitem{SSK}
Jun Sun, Steffen Staab, and Fariba Karimi.
\newblock Decay of relevance in exponentially growing networks.
\newblock In {\em Proceedings of the 10th ACM Conference on Web Science},
  WebSci '18, pages 343--351, New York, NY, USA, 2018. ACM.

\bibitem{RvdH}
Remco van~der Hofstad.
\newblock {\em Random graphs and complex networks. {V}ol. 1}.
\newblock Cambridge Series in Statistical and Probabilistic Mathematics, [43].
  Cambridge University Press, Cambridge, 2017.

\bibitem{Wyuyu}
Mingyang Wang, Guang Yu, and Daren Yu.
\newblock Measuring the preferential attachment mechanism in citation networks.
\newblock {\em Physica A: Statistical Mechanics and its Applications},
  387(18):4692 -- 4698, 2008.

\end{thebibliography}
\end{document}